\def\qed{\hfill {\hbox{${\vcenter{\vbox{               
   \hrule height 0.4pt\hbox{\vrule width 0.4pt height 6pt
   \kern5pt\vrule width 0.4pt}\hrule height 0.4pt}}}$}}}
\newtheorem{theorem}{Theorem}
\newtheorem{definition}{Definition}
\newtheorem{lemma}[theorem]{Lemma}
\newtheorem{example}{Example}
\newtheorem{remark}{Remark}
\newenvironment{proof}[1][Proof]{\smallskip\noindent{\bf #1.}\quad}%
{\qed\par\medskip}
\date{}
\title{\Large \textbf{Twisted virtual biracks and their twisted virtual link invariants}}
\author{Jessica Ceniceros\footnote{Email: \texttt{jceniceros11@students.claremontmckenna.edu}}
 \and Sam Nelson\footnote{Email: \texttt{knots@esotericka.org}}}
\begin{document}
\maketitle

\begin{abstract}
A virtual link can be understood as a link in a trivial $I$-bundle over an 
orientable compact surface with genus. A \textit{twisted virtual link} is 
a link in a trivial $I$-bundle over a not-necessarily orientable compact
surface. A \textit{twisted virtual birack} is an algebraic structure with
axioms derived from the twisted virtual Reidemeister moves. We extend 
a method previously used with racks and biracks to the twisted case to define 
computable invariants of twisted virtual links using finite twisted virtual 
biracks with birack rank $N\ge 1$. As an application, we classify twist
structures on the virtual Hopf link.
\end{abstract}

\begin{center}
\parbox{6in}{
\textsc{Keywords:} Virtual links, Twisted virtual links, biracks,
counting invariants

\textsc{2010 MSC:} 57M27, 57M25
}
\end{center}

\section{\large\textbf{Introduction}}

In \cite{K,KK} \textit{virtual knots and links} and equivalently
\textit{abstract knots links} respectively were introduced. Virtual knots 
were initially conceived as combinatorial objects, Reidemeister equivalence 
classes of Gauss codes, where abstract knots were geometric in nature, 
knot diagrams drawn on minimalistic supporting surfaces. In \cite{CKS} a 
geometric interpretation of virtual and abstract links as isotopy classes 
of simple closed curves in $I$-bundles over compact oriented surfaces modulo
stabilization moves was developed.

In recent work such as \cite{B,K2}, virtual and abstract links are extended 
to allow compact non-orientable supporting surfaces; the resulting links are 
called \textit{twisted virtual links}. Invariants of twisted virtual links
such as the twisted Jones polynomial and the twisted knot group have been
introduced and studied. Twisted virtual knot theory is very new, with many 
interesting open questions, such as finding a supporting surface of minimal
genus for a given twisted virtual knot or link.

Quandle- and biquandle-based invariants of twisted virtual links were first
considered in \cite{SPC}. In \cite{N} a counting invariant of unframed
oriented classical and virtual knots and links was defined using labelings 
by finite racks and extended to finite biracks in \cite{N2}. In this paper 
we extend the birack counting invariant to the case of twisted virtual links. 

The paper is organized as follows: in section \ref{TVL} we recall the basics 
of twisted virtual link theory from \cite{B,K2} and make a few observations 
which will be useful in later sections. In section \ref{TVB} we recall
twisted virtual biracks give some examples. In section \ref{INV} we define 
the twisted virtual birack counting invariant and provide examples and 
sample computations of the invariant. As an application, we classify twist
structures on the virtual Hopf link. We end with a few open questions for 
future research in section \ref{Q}.

\section{\large\textbf{Twisted Virtual Links}}\label{TVL}

\textit{Twisted virtual links} were introduced in \cite{B} and subsequently 
studied in works such as \cite{K2,SPC}. Twisted virtual links extend
the concept of \textit{virtual links} from previous work \cite{K,KK};
where virtual links arise by drawing link diagrams on compact orientable
surfaces with nonzero genus, twisted virtual links arise when we draw
link diagrams on compact surfaces allowing nonzero genus and nonzero cross-cap 
number. 

Knot and link diagrams are usually drawn on flat paper without explicitly
specifying a supporting surface $\Sigma$ on which the knot diagram is drawn. 
If we do explicitly draw $\Sigma$, we have a \textit{link-surface} diagram. 
Often we will remove a disk from $\Sigma\setminus L$ so we can flatten 
$\Sigma$ as depicted. Virtual crossings correspond to crossed bands while
classical crossings correspond to crossings drawn on $\Sigma$.

\[\includegraphics{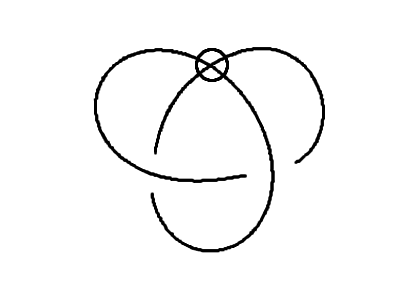}\quad \includegraphics{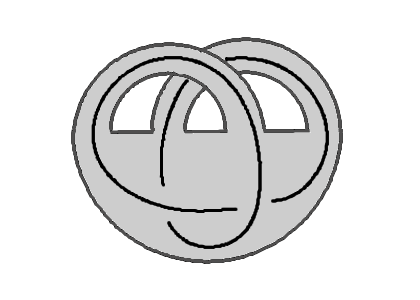}\]

Geometrically, a twisted virtual link is a stable equivalence class of simple 
closed curves in an $I$-bundle, i.e. an ambient space obtained by thickening 
the surface $\Sigma$ on which the link diagram is drawn. Here ``stable 
equivalence''
means that in addition to ambient isotopy of the link within the thickened
surface, we can \textit{stabilize} the surface $\Sigma$ by adding or deleting
torus summands or cross caps not containing the link. If we remove a disk from 
$\Sigma\setminus L$ and flatten the resulting $\Sigma'$ in the usual way to get
\[\includegraphics{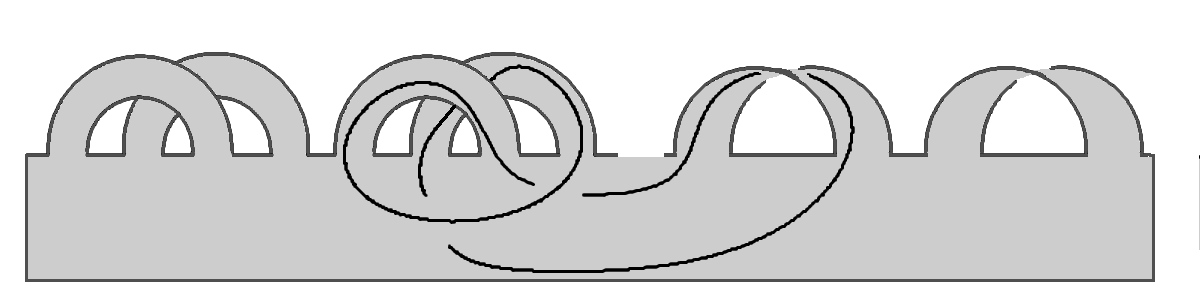}\]
then stabilization moves have the form
\[\includegraphics{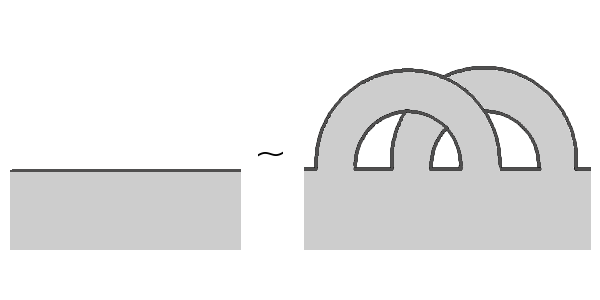} \quad\raisebox{0.5in}{and}\quad 
\includegraphics{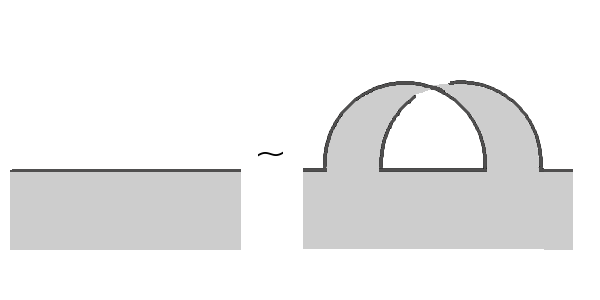}.\]

We can represent twisted virtual links combinatorially without having to draw 
the supporting surface $\Sigma$ by representing
crossings arising from genus in $\Sigma$ with circled self-intersections
known as \textit{virtual crossings} (see \cite{K,KK}) and representing
places where our link traverses a cross cap in $\Sigma$ with a small bar.

\[\includegraphics{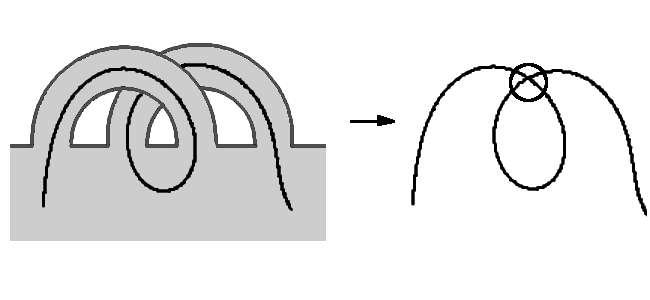}\quad 
\quad \quad \includegraphics{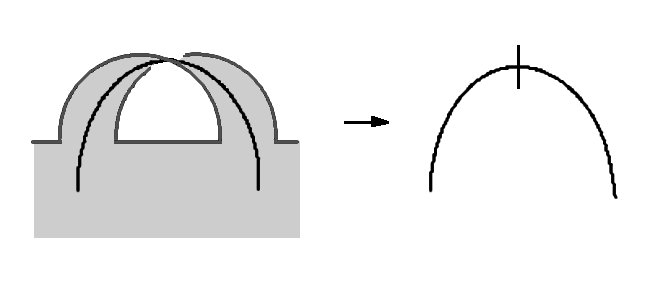}\]

Then for instance the twisted virtual Hopf link diagram below corresponds to 
the link-surface diagram shown.

\[\includegraphics{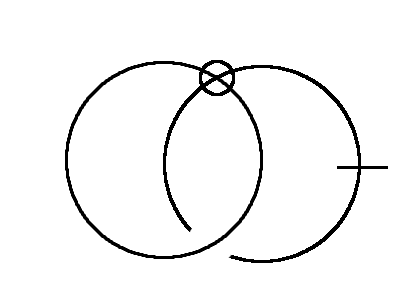}\quad\includegraphics{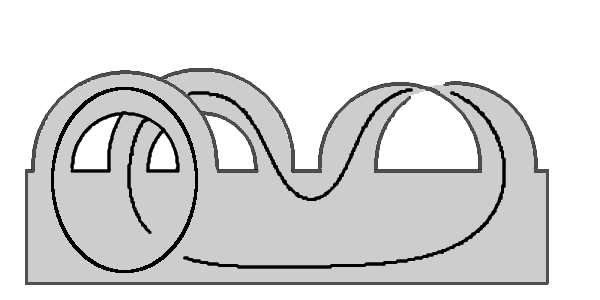}\]

The portions of a twisted virtual link diagram $L$ between overcrossings, 
undercrossings, virtual crossings and bars are \textit{semiarcs}. For instance, 
the twisted virtual Hopf link diagram above has six semiarcs.

In \cite{B} it is shown that stable isotopy of twisted virtual links 
corresponds to the equivalence relation on twisted virtual link diagrams
generated by the \textit{twisted virtual Reidemeister moves}:

\[\includegraphics{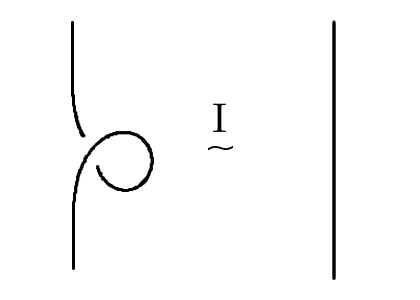}\quad \quad \quad
\includegraphics{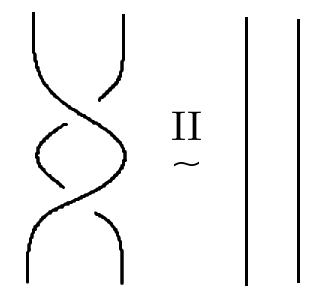} \quad\quad \quad
\includegraphics{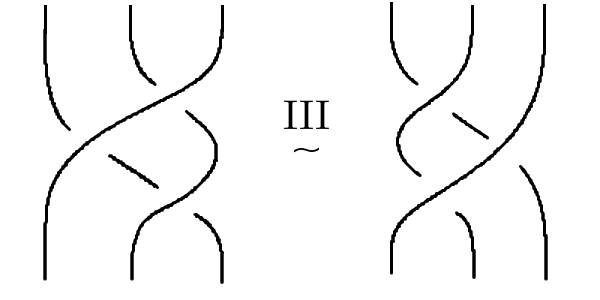} \]
\[\includegraphics{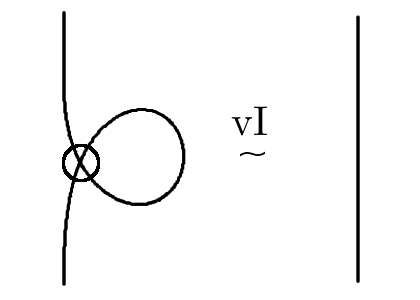}\quad\ \
\includegraphics{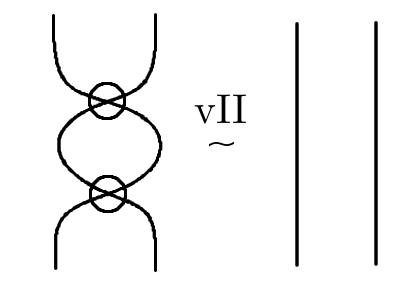} \quad\ \
\includegraphics{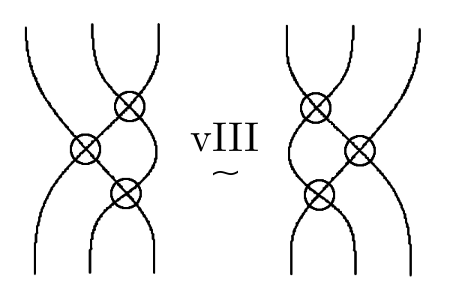} \quad\ \
\includegraphics{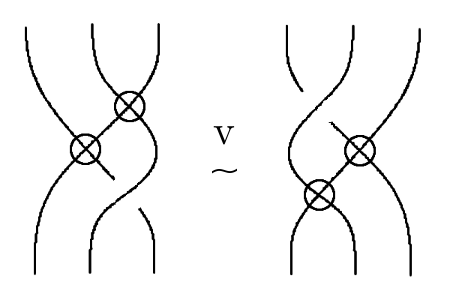} \]
\[\raisebox{0.1in}{\includegraphics{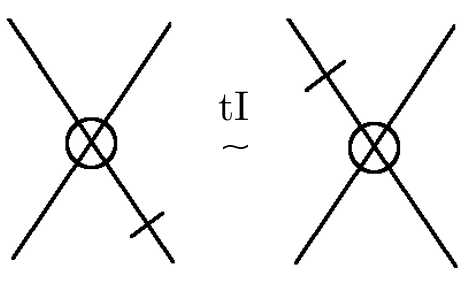}} \quad\quad \quad
\includegraphics{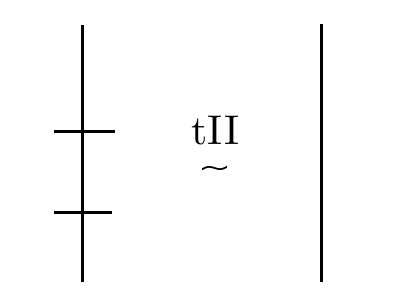}\quad \quad \quad
\includegraphics{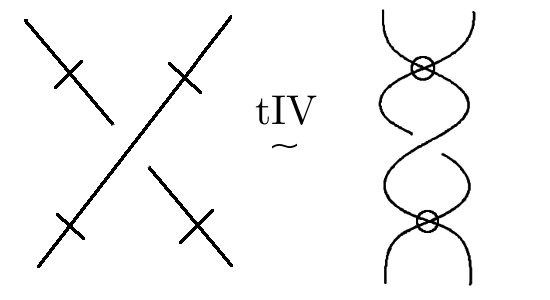} 
\]

Each of these moves can be understood in terms of link-surface diagrams 
or abstract link diagrams; for instance, the last move looks like:
\[\includegraphics{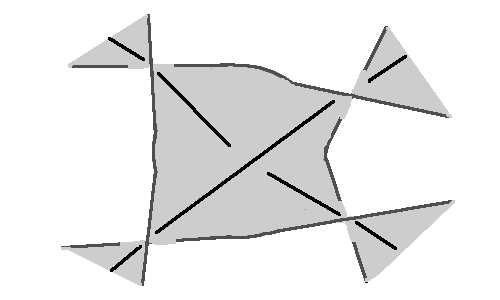}\raisebox{0.5in}{$\sim$}
\raisebox{-0.3in}{\includegraphics{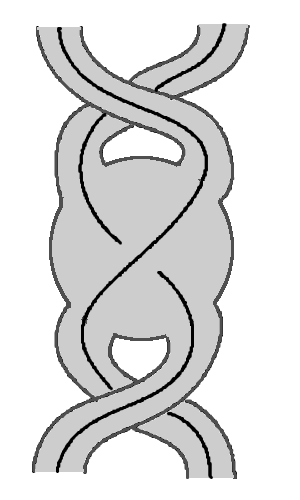}}
\]

Note that we do not need twist bars covering multiple strands since for 
any two neighboring strands going through a twisted band, we 
can remove a disc from the surface between the strands and replace the 
multi-strand bar with two bars and a virtual crossing:
\[\includegraphics{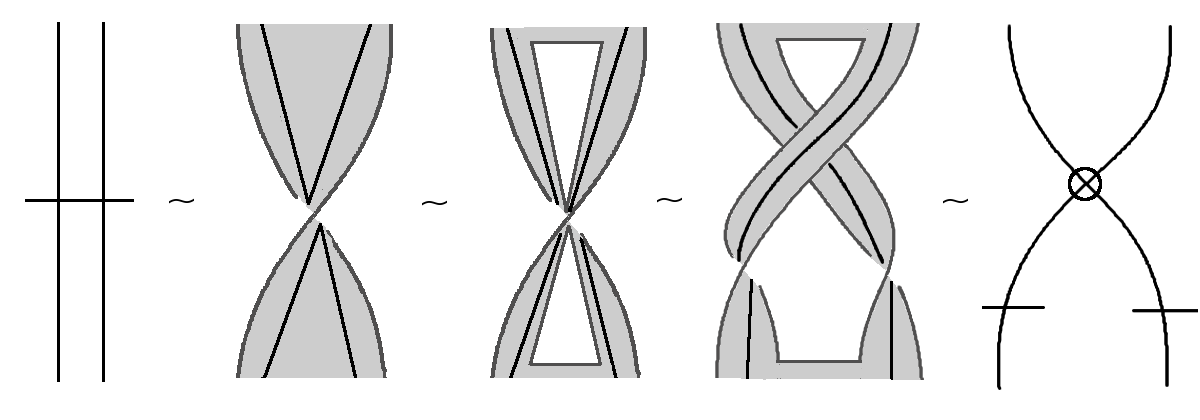}\]

The four virtual moves together imply the \textit{detour move}, which says 
that a strand with only virtual crossings can be moved past any tangle
containing classical or virtual crossings; move tI implies that strands
with only virtual crossings can detour past twist bars as well.
\[\includegraphics{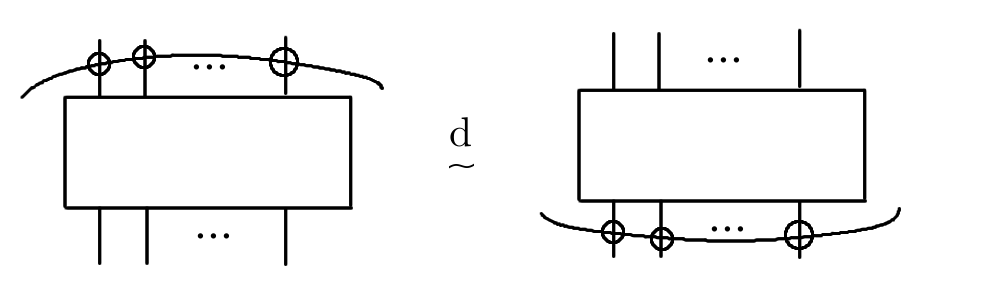}\]

Replacing the usual classical Reidemeister type I move with the blackboard 
framed type I moves 
\[\includegraphics{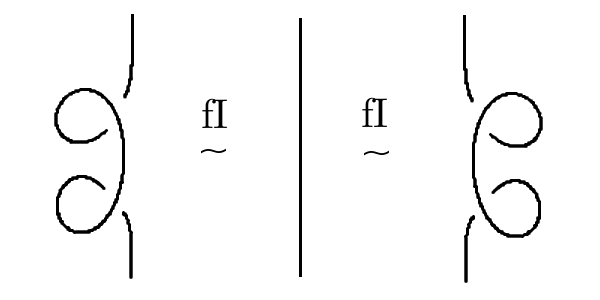}
\quad \raisebox{0.5in}{or equivalently}\quad 
\includegraphics{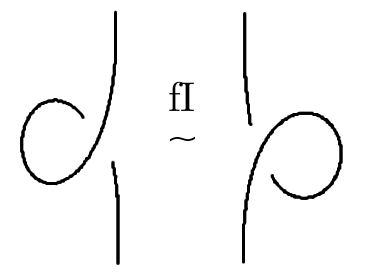} \ \raisebox{0.5in}{and}\
\includegraphics{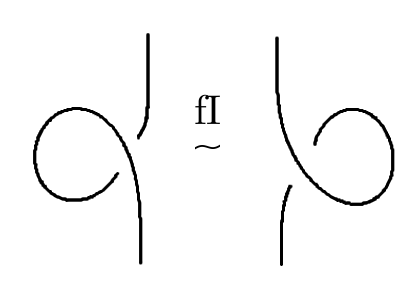}
\] yields 
\textit{blackboard framed twisted virtual isotopy}. Including orientations
on the link components gives \textit{oriented blackboard framed twisted 
virtual isotopy}. 

We will primarily be interested in using invariants
of oriented blackboard framed twisted virtual isotopy to define an
invariant of oriented unframed twisted virtual isotopy analogous to
those defined in \cite{N} and \cite{N2}.

We will find the following observations useful in the next section.

\begin{lemma}
A twist bar can be moved past a classical kink with virtual twisted 
blackboard framed isotopy moves.
\end{lemma}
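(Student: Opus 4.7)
The plan is to push the twist bar through the classical self-crossing of the kink using the twisted Reidemeister move that governs the interaction between a bar and a classical crossing, and then cancel any surplus bars appearing on the loop using the involutive bar move (tI).

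First, without loss of generality, consider a twist bar on the arc entering the kink from the left; the symmetry of the twisted Reidemeister moves allows the other configurations (bar on exiting arc, kink of opposite sign, reversed orientation) to be handled by analogous arguments. Label the self-crossing of the kink as $c$. At $c$ four arcs meet: two of them are portions of the main strand (one incoming and one outgoing), while the other two are the two ends of the kink's loop, which are connected externally through the rest of the loop.

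The main step is to apply the twist move that slides a bar through a classical crossing (move tIII in the figure) at the crossing $c$. This move pushes the bar on the incoming main strand through $c$, so that a bar now sits on the outgoing main strand (the desired target position), while two new bars appear, one on each of the loop's two arcs adjacent to $c$. Because the loop is a single arc containing no other classical crossings or bars, these two extra bars can be slid around the loop using framed planar isotopy (together with the detour move for any virtual crossings that might be introduced) until they become adjacent, at which point they annihilate via tI. The remaining configuration is exactly the kink with the bar shifted to the opposite side, as claimed.

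The main obstacle will be keeping track of the precise configuration of bars produced by tIII in each case (positive vs.\ negative kink, which side of the kink the bar starts on, and the over/under information at $c$), and confirming that the leftover loop bars can always be slid together and cancelled. However, since the loop is a simple unobstructed arc in every case, the argument reduces to the same template: push the bar through with tIII, slide the surplus bars together along the loop, and cancel via tI.
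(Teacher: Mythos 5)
There is a genuine gap at the heart of your argument: the move you invoke does not exist. You describe ``the twist move that slides a bar through a classical crossing'' as taking a single bar on one of the four arcs at the crossing $c$ and producing a bar on each of the other three arcs while leaving the crossing unchanged. The actual twisted move at a classical crossing (the one encoded by axiom (vi), $(T\times T)B(T\times T)=VBV$) requires a bar on \emph{all four} arcs adjacent to the crossing simultaneously, and it trades that entire configuration for the \emph{virtualized} crossing $VBV$ --- two new virtual crossings flanking the classical one --- with all four bars removed. Your purported move is not only absent from the move list, it is not derivable: adding a cancelling pair of bars (tI) on the first arc, your move would yield ``four bars around a crossing equals the bare crossing,'' which together with axiom (vi) forces $B=VBV$, i.e.\ every classical crossing equals its virtualization. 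That is false in general (the paper's Remark 3 turns on exactly the failure of such an identity), so any proof built on this step proves too much.

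The correct route, which is what the paper's diagrammatic proof carries out, is: use tI to introduce cancelling pairs of bars on the other arcs at $c$ so that each of the four arc-ends at the crossing carries one bar (the spare bars sit further out, including one on the target arc); apply the genuine four-bar move to replace the barred crossing by $VBV$; then clean up the resulting virtual kink with the virtual Reidemeister moves and the detour move, and cancel the leftover bars with tI and tII. Your observation that surplus bars on the kink's loop can be slid together and cancelled is fine as far as it goes (the loop is a single classical semiarc), but without the correct crossing move the bars never get into a position where that cancellation finishes the job, and your sketch never accounts for the virtual crossings that the real move necessarily introduces and that must then be removed.
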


\begin{proof}
\[\includegraphics{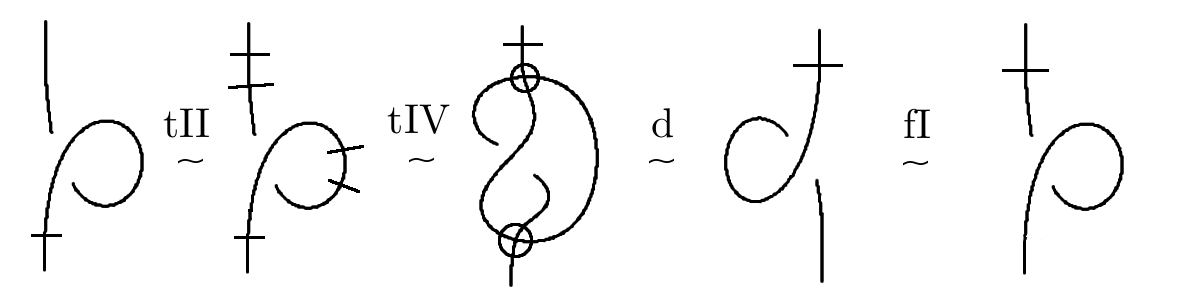} 
\]
\end{proof}

\begin{lemma}
The two oriented versions of the last twisted virtual move are equivalent,
i.e we have
\[\includegraphics{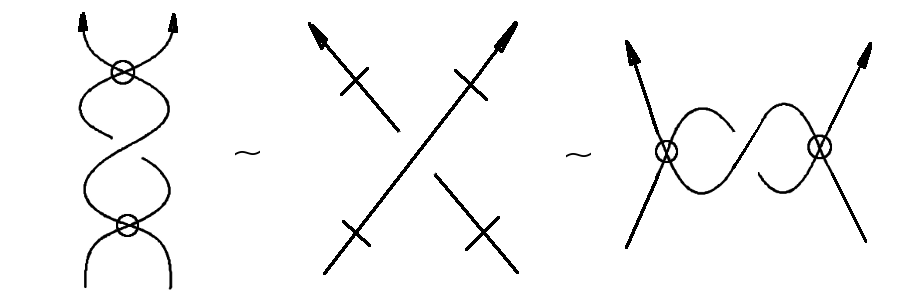}.\]
\end{lemma}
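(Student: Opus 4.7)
The plan is to derive the second oriented version of the move from the first by inserting auxiliary structure on one strand, applying the known oriented version, and then cleaning up with the virtual and framed moves already at our disposal.

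First I would start with the left-hand side of the oriented move whose validity is not assumed, and use a framed type I move to introduce a canceling pair of classical kinks on the strand whose orientation differs from the ``known'' version. Using Lemma 1, I can slide the twist bar past one of these kinks, so that locally the tangle now contains a sub-tangle in the configuration of the \emph{known} oriented version of the last twisted virtual move. Applying that version moves the twist bar through the classical crossing. What remains on the other side are the residual kink, a twist bar, and possibly some virtual crossings produced along the way.

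Next I would use the detour move, together with moves vII and tI, to push the virtual crossings off the classical crossing region, and use Lemma 1 a second time to slide the twist bar back past the remaining kink. Finally, a framed type I move cancels this kink, yielding exactly the right-hand side of the desired oriented version. Because every step is either a twisted virtual Reidemeister move, a framed R1 move, the detour move, or an application of Lemma 1, the two oriented versions are equivalent under oriented blackboard framed twisted virtual isotopy.

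The main obstacle will be the bookkeeping of over/under information and the direction of the twist bar as it is slid through kinks and crossings: Lemma 1 must be applied with the correct handedness of the kink so that the bar emerges on the side needed to apply the known version of the move, and the detour-move cleanup must be arranged so that no stray virtual crossing obstructs the final framed R1 cancellation. This is the same kind of argument used classically to show that the two oriented versions of Reidemeister III are equivalent once the other oriented moves are available, and the same strategy adapts here with the twist bar playing the role of the extra strand.
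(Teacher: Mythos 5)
Your outline captures the right idea at a high level: Lemma~1 (sliding a twist bar past a classical kink) is stated immediately before this lemma precisely so that it can be invoked here, and the paper's proof is a single figure giving an explicit local move sequence, which the placement of Lemma~1 strongly suggests uses kinks and Lemma~1 together with the framed and virtual moves in roughly the way you describe.

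That said, your proposal defers essentially all of the content to ``bookkeeping'' and then does not do the bookkeeping, and for a lemma whose entire proof in the paper \emph{is} the explicit move sequence, that deferral is the gap. Concretely, after inserting the canceling kinks, sliding the bar via Lemma~1, and applying the known oriented version, you must verify that the residual kink, the twist bar, and whatever virtual crossings are produced really do cancel via framed R1, tII, vII, and the detour move, with the correct handedness at every step, rather than leaving a stray crossing or a bar on the wrong side. Your analogy to the oriented-R3-equivalence argument is also a little misleading: that classical argument rotates a strand using R2, not R1 kinks, so it is not automatic that a kink-based maneuver reconciles the two orientations here --- one genuinely has to draw the sequence and check it terminates in the desired diagram. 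As written, your proposal is a plausible plan for finding the move sequence, but it stops short of producing it, which is exactly what the paper's figure supplies.
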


\begin{proof}
\[\includegraphics{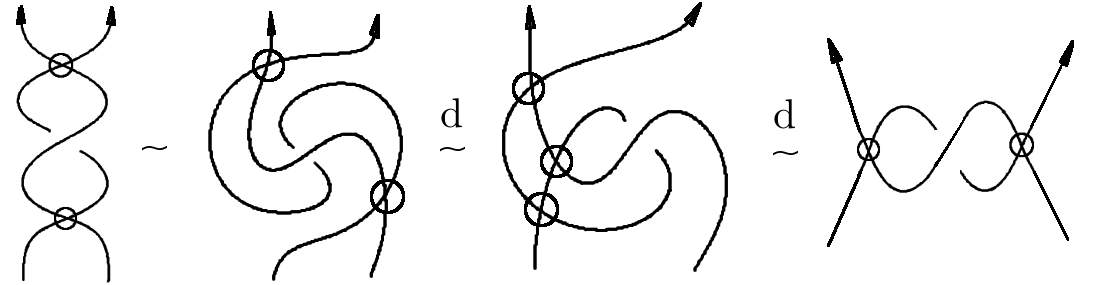} 
\]
\end{proof}

\section{\large\textbf{Twisted Virtual Biracks}}\label{TVB}

We begin with a definition slightly modified from \cite{SPC}.

\begin{definition}
\textup{Let $X$ be a set and $\Delta:X\to X\times X$ the diagonal map
$\Delta(x)=(x,x)$. A \textit{twisted virtual birack} is a set $X$ with 
invertible maps $B,V:X\times X\to X\times X$, 
and an involution $T:X\to X$ satisfying the axioms below where $F_j$ 
denotes a map $F$ followed by projection onto the $j$th component:
\begin{list}{}{}
\item[(i)] $B$ and $V$ are \textit{sideways invertible:} there exist
unique invertible maps $S:X\times X\to X\times X$ and 
$vS:X\times X\to X\times X$ such that for all $x,y\in X$ we have
\[S(B_1(x,y),x)=(B_2(x,y),y)\quad \mathrm{and}\quad
vS(V_1(x,y),x)=(V_2(x,y),y);\]
\item[(ii)] The compositions $(S^{\pm 1}\circ \Delta)_k$ and
$(vS^{\pm 1}\circ \Delta)_k$ are bijections for $k=1,2$;
\item[(iii)] $(vS\circ \Delta)_1=(vS\circ \Delta)_2$;
\item[(iv)] $B$ and $V$ satisfy the set-theoretic Yang-Baxter equations:
\[(B\times \mathrm{Id}_X)(\mathrm{Id}_X\times B)(B\times \mathrm{Id}_X)=
(\mathrm{Id}_X\times B)(B\times \mathrm{Id}_X)(\mathrm{Id}_X\times B),
\]
\[(V\times \mathrm{Id}_X)(\mathrm{Id}_X\times V)(V\times \mathrm{Id}_X)=
(\mathrm{Id}_X\times V)(V\times \mathrm{Id}_X)(\mathrm{Id}_X\times V),
\]
and
\[(B\times \mathrm{Id}_X)(\mathrm{Id}_X\times V)(V\times \mathrm{Id}_X)=
(\mathrm{Id}_X\times V)(V\times \mathrm{Id}_X)(\mathrm{Id}_X\times B);
\]
\item[(v)] \[(T\times \mathrm{Id})V=V(\mathrm{Id}\times T)
\quad \mathrm{and}\quad(\mathrm{Id}\times T)V=V(T\times \mathrm{Id}),
\]
\item[(vi)] \[(T\times T)B(T\times T)=VBV.\]
\end{list}
If we also have 
\begin{itemize}
\item[(iii$'$)] $(S\circ \Delta)_1=(S\circ \Delta)_2$,
\end{itemize} 
then $X$ is a \textit{twisted virtual biquandle}.}
\end{definition}

The twisted virtual birack axioms are obtained from the blackboard framed 
twisted virtual Reidemeister moves using the following semiarc-labeling
scheme:
\[\includegraphics{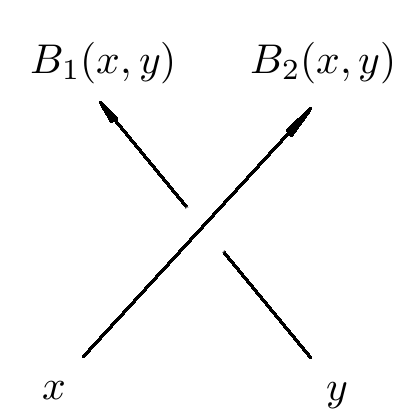}\quad \includegraphics{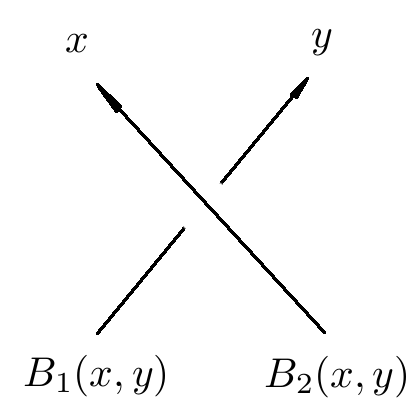}
\quad \includegraphics{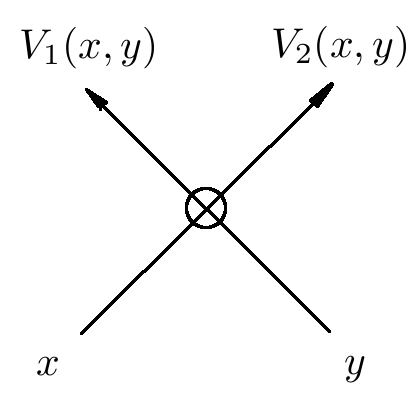}\quad \includegraphics{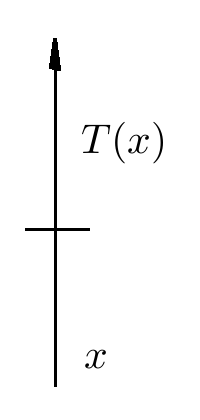}\]
See \cite{SPC} for more details.

Let $X$ and $Y$ be twisted virtual biracks with maps $B_X,V_X,T_X$ and
$B_Y,V_Y,T_Y$ respectively. As with other algebraic structures, we have the 
following common notions:
\begin{itemize}
\item A map $f:X\to Y$ is a \textit{homomorphism of twisted virtual biracks}
if 
\[B_Y(f\times f)=(f\times f) B_X,\quad 
V_Y(f\times f)=(f\times f) V_X\quad  \mathrm{and} \quad 
T_Y f=f T_X;\] and
\item If $Y\subset X$, then $Y$ is a \textit{twisted virtual subbirack} of $X$
provided 
\[B_Y=B_XI,\quad T_Y=T_X I,\quad \mathrm{and} \quad 
T_Y=T_X I\] 
where $I:Y\to X$ is inclusion.
\end{itemize}

The map $\pi=(S^{-1}\circ\Delta)_1\circ(S^{-1}\circ\Delta)_2^{-1}$ represents
going through a positive kink; this is known as the \textit{kink map}. A
birack is \textit{biquandle} if its kink map is the identity; otherwise,
$\pi$ is an element of $S_n$ and its order $N$ is the \textit{birack rank}
or \textit{birack characteristic} of $X$.

\begin{remark}
\textup{If $X$ is a twisted virtual birack, then the map $B$ defines a
birack structure on $X$ and the map $V$ defines a semiquandle structure
on $X$, i.e. a biquandle structure  with $V^2=\mathrm{Id}$. The pair 
$B,V$ then defines a \textit{virtual birack} structure
on $X$, i.e. a birack structure with a compatible semiquandle structure.
Thus, a twisted virtual birack is a virtual birack with a compatible
twist map $T$. See \cite{HN,KM}.}

\textup{Moreover, if the birack rank of $(X,B)$ is 1, then we have a 
\textit{twisted virtual biquandle}. If $B_2(x,y)=x$ for all $x\in X$, then
we have a \textit{twisted virtual rack}, and a twisted virtual rack which is
also a twisted virtual biquandle is a \textit{twisted virtual quandle}.
We summarize the different structures with a table:}

\medskip

\textup{
\begin{tabular}{ll}
\begin{tabular}{|l|l|l|} \hline
& $N=1$ & $N\ne 1$ \\ \hline
$B_2=\mathrm{Id}_X$ & quandle & rack \\ \hline
$B_2\ne\mathrm{Id}_X$ & biquandle & birack \\\hline
\end{tabular} &
\begin{tabular}{cl}
$\bullet$ & If $B^2=\mathrm{Id}_{X\times X}$, replace ``bi--'' with ``semi--'' \\
$\bullet$ & If $V(x,y)\ne (y,x)$, add ``virtual'' \\
$\bullet$ & If $T\ne \mathrm{Id}_X$, add ``twisted'' \\
\end{tabular}
\end{tabular}}

\medskip

\end{remark}

\begin{example}
\textup{Let $X$ be a commutative ring with multiplicative group of units 
$X^{\ast}$, $t,r\in X^{\ast}$ and $s\in X$ satisfying $s^2=(1-tr)s$; then $X$ is 
a birack with $B(x,y)=(ty+sx,rx)$
known as a $(t,s,r)$-birack (see \cite{N}). If we likewise choose 
$v,w\in X^{\ast}$, $u\in X$ satisfying $u^2=(1-vw)u$ and set 
$V(x,y)=(vy+ux,wx)$, then $V^2=\mathrm{Id}$ requires that 
\[x=(vw+u^2)x+uvy\quad \mathrm{and}\quad y=vwy+wux\]
which in turn implies $w=v^{-1}$ and $u=0$. Note that $u^2=0^2=(1-1)0=(1-vw)u$
and the condition $u^2=(1-vw)u$ is automatic in this case.
Thus, our virtual operation becomes
$V(x,y)=(vy,v^{-1}x)$. The mixed virtual move then 
requires that multiplication by $v$ commutes with multiplication by $t,s$ and 
$r$. }

\textup{Choosing $T\in X^{\ast}$ so that $T:X\to X$ is given by $T(x)=Tx$, the 
twisted moves then  require that $T^2=1$, multiplication by $v$ commutes with 
multiplication by $T$, and that
\[T^2rx=rx=v^{-2}tx+sy, \quad \mathrm{and}\quad v^2ry=T^2ty+T^2sx=ty+sx.\]
Comparing coefficients, we see that we need $s=0$ and $v^2r=t$. Since $s=0$
implies $s^2=(1-tr)s$, the conditions on our coefficients reduce to
$t,r,v,T\in X^{\ast}$ with $T^2=1$ and $v^2r=t$. Thus, for any
commutative ring $X$ with units 
$t,r,v,T$ satisfying $v^2r=t$ and $T^2=1$, we have a twisted virtual 
birack structure on $X$ defined by}
\[B(x,y)=(ty,rx), \quad V(x,y)=(vy, v^{-1}x),\quad \mathrm{and}\quad T(x)=Tx.\]
\end{example}

\begin{remark}\label{noinc}
\textup{It is curious that unlike the case of virtual biracks, 
simply making the twist operation trivial does not give a natural
embedding of the category of virtual biracks into the
category of twisted virtual biracks. More precisely, every birack is a
virtual birack with virtual operation $V(x,y)=(y,x)$; on the other
hand, if we have trivial operations at the virtual crossings and 
twist bars, then the twisted move tv requires the component maps of
$B$ to be equal, i.e. that $y_x=y^x$, a condition which is false for 
most biracks. In particular, for a given virtual birack, the set of 
compatible twist structures may be empty. }
\end{remark}

\begin{example}
\textup{We can represent a twisted virtual birack structure on a finite set
$X=\{x_1,\dots,x_n\}$ with an $n\times (4n+1)$-matrix $M_X=[U|L|vU|vL|T]$ 
with $n\times n$ blocks $U,L,vU,vL$ encoding the operations $B$, $V$ and an
$n\times 1$ block $T$ encoding the involution $T$ in the following way:
\begin{itemize}
\item if $B(x_i,x_j)=(x_k,x_l)$ we set $U_{ji}=k$ and $L_{ij}=l$ (note the 
reversed order of the subscripts in $U$; this is for compatibility with 
previous work);
\item if $V(x_i,x_j)=(x_k,x_l)$ we set $vU_{ji}=k$ and v$L_{ij}=l$, and
\item if $T(x_i)=x_j$ we set $T[i,1]=j$.
\end{itemize}
Every finite twisted virtual birack can be encoded by such a matrix, and
conversely an $n\times (4n+1)$ matrix with entries in $\{1,2,\dots, n\}$
defines a twisted virtual birack provided the twisted virtual birack axioms 
are all satisfied by the maps $B,V$ and $T$ defined by the matrix.}

\textup{
For instance, our Python computations reveal that there are
eight twisted virtual birack structures on the set $X=\{x_1,x_2\}$, given by
the  matrices
\[
\left[\begin{array}{cc|cc|cc|cc|c}
2 & 2 & 2 & 2 & 1 & 1 & 1 & 1 & 2 \\
1 & 1 & 1 & 1 & 2 & 2 & 2 & 2 & 1 \\
\end{array}\right],
\left[\begin{array}{cc|cc|cc|cc|c}
2 & 2 & 2 & 2 & 2 & 2 & 2 & 2 & 2 \\
1 & 1 & 1 & 1 & 1 & 1 & 1 & 1 & 1 \\
\end{array}\right], \]\[
\left[\begin{array}{cc|cc|cc|cc|c}
1 & 1 & 1 & 1 & 1 & 1 & 1 & 1 & 2 \\
2 & 2 & 2 & 2 & 2 & 2 & 2 & 2 & 1 \\
\end{array}\right],
\left[\begin{array}{cc|cc|cc|cc|c}
1 & 1 & 1 & 1 & 2 & 2 & 2 & 2 & 2 \\
2 & 2 & 2 & 2 & 1 & 1 & 1 & 1 & 1 \\
\end{array}\right], \]
\[
\left[\begin{array}{cc|cc|cc|cc|c}
2 & 2 & 2 & 2 & 1 & 1 & 1 & 1 & 1 \\
1 & 1 & 1 & 1 & 2 & 2 & 2 & 2 & 2 \\
\end{array}\right],
\left[\begin{array}{cc|cc|cc|cc|c}
2 & 2 & 2 & 2 & 2 & 2 & 2 & 2 & 1 \\
1 & 1 & 1 & 1 & 1 & 1 & 1 & 1 & 2 \\
\end{array}\right], \]
\[
\left[\begin{array}{cc|cc|cc|cc|c}
1 & 1 & 1 & 1 & 1 & 1 & 1 & 1 & 1 \\
2 & 2 & 2 & 2 & 2 & 2 & 2 & 2 & 2 \\
\end{array}\right],
\left[\begin{array}{cc|cc|cc|cc|c}
1 & 1 & 1 & 1 & 2 & 2 & 2 & 2 & 1 \\
2 & 2 & 2 & 2 & 1 & 1 & 1 & 1 & 2 \\
\end{array}\right].\]}
\end{example}

\section{Counting Invariants}\label{INV}

Let $L$ be a blackboard framed twisted virtual link diagram. 
Recall that a semiarc is a portion of $L$ between classical or virtual 
crossing points or twist bars. Let us define a 
\textit{classical semiarc} as a portion of a twisted virtual knot or link
obtained by dividing at twist bars and classical over and under 
crossing points only; classical semiarcs may contain virtual crossing points.

\begin{definition}\label{def:tvblabel}
\textup{Let $L$ be a twisted virtual link diagram and $X$ a twisted 
virtual birack. A \textit{twisted virtual birack labeling} of $L$
by $X$, or just an $X$\textit{-labeling of} $L$, is an assignment
of an element of $X$ to every semiarc in $L$ such that at every
classical crossing, virtual crossing, and twist bar we have}
\[\includegraphics{jessc-sn2-18.png}\quad \includegraphics{jessc-sn2-19.png}
\quad \includegraphics{jessc-sn2-20.png}\quad \includegraphics{jessc-sn2-21.png}\]
\end{definition}

The twisted virtual birack axioms are consequences of the oriented 
blackboard framed twisted virtual Reidemeister moves using the labeling
conventions in definition \ref{def:tvblabel}. Thus, by construction we have

\begin{theorem}
If $L$ and $L'$ are twisted virtually blackboard framed isotopic twisted 
virtual links and $X$ is a finite twisted virtual birack, then the number
of $X$-labelings of $L$ equals the number of $X$-labelings of $L'$.
\end{theorem}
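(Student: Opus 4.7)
The plan is to reduce the theorem to a move-by-move verification: since oriented blackboard framed twisted virtual isotopy is generated by the (oriented, framed) twisted virtual Reidemeister moves listed in Section \ref{TVL}, it suffices to show, for each such move, that every $X$-labeling of the diagram on one side of the move extends uniquely to an $X$-labeling of the diagram on the other side, with all labels on semiarcs outside a small neighborhood of the move left unchanged. Summing over the finitely many moves needed to pass from $L$ to $L'$ then yields the equality of label counts.

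First, I would fix notation so that for a given move, the semiarcs entering the neighborhood of the move from outside are labeled with fixed elements of $X$, while the semiarcs internal to the neighborhood are the unknowns. The claim in each case is that the labeling rules from Definition \ref{def:tvblabel}, applied at each crossing or bar in the local picture, yield the same set of possible input/output tuples on both sides of the move. Because the axioms in the definition of a twisted virtual birack were extracted from exactly these moves using the labeling conventions at the top of Section \ref{TVB}, each axiom is precisely the equality of the two local labeling maps for one move.

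Concretely, I would group the moves by which axioms they invoke. The framed classical moves RII and RIII give sideways invertibility of $B$ and the first Yang--Baxter equation in axiom (iv); the virtual moves vII and vIII give the corresponding properties of $V$ in axiom (i) and axiom (iv); the mixed virtual move gives the third Yang--Baxter equation in axiom (iv); axiom (ii) together with sideways invertibility gives invariance under the framed RI moves (the kink map $\pi$ controls how framing contributes to the labeling count); axiom (iii) handles the virtual RI analogue; the twisted move tv gives axiom (v); and move tIII gives axiom (vi). For moves tI and tII, $T$ being an involution suffices, and the proof of Lemma 1 reduces the move tI configuration combined with a framed kink to a known case. Lemma 2 ensures that we do not need to verify both oriented versions of the final twisted move separately.

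The main obstacle is simply bookkeeping: one must check that every oriented version of every move is covered, and that the conventions relating $B$, $V$, $T$ to the diagrammatic labels match the chosen orientations. The detour move, noted already in Section \ref{TVL}, lets us avoid a separate check for many oriented variants of the virtual moves, and Lemma 2 plays the same role for the twist moves. Once these reductions are in hand, each remaining case is a direct translation of a single axiom, and invertibility of $B$, $V$ and $T$ guarantees that the correspondences are bijections rather than merely surjections, completing the count-preservation argument.
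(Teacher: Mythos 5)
Your proposal is correct and is essentially an expanded version of the paper's own (very terse) argument: the paper simply observes that the twisted virtual birack axioms were derived precisely from the oriented blackboard framed twisted virtual Reidemeister moves under the labeling conventions of Definition \ref{def:tvblabel}, so invariance of the labeling count follows ``by construction.'' Your move-by-move verification, with its grouping of moves by the axioms they invoke and the use of the detour move and Lemma 2 to cut down on oriented cases, is exactly the bookkeeping the paper leaves implicit.
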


As with quandle labelings of oriented classical links, rack labelings of 
blackboard framed classical links, etc., an $X$-labeling of a twisted
virtual link diagram $L$ can be understood as a homomorphism $f:TVB(L)\to X$
of twisted virtual biracks where $TVB(L)$ is the \textit{fundamental
twisted virtual birack} of $L$ defined below. More precisely, let $G$ be a set 
of symbols, one for each semiarc in $L$, and define the set of \textit{twisted
virtual birack words in $G$, $W(G)$}, recursively by the rules
\begin{itemize}
\item $g\in G\Rightarrow g\in W(G)$ and
\item $g,h\in W(G)\Rightarrow B_{k}^{\pm 1}(g,h),\ S_{k}^{\pm 1}(g,h),\ 
V_{k}^{\pm 1}(g,h),\ vS_{k}^{\pm 1}(g,h),\ T(g)\in W(G)$ for $k=1,2$.
\end{itemize}
Then the \textit{free twisted virtual birack} on $G$ is the set of equivalence
classes in $W(G)$ modulo the equivalence relation generated by the 
twisted virtual birack axioms, and the \textit{fundamental
twisted virtual birack of $L$} is the set of equivalence classes of elements
of the free twisted virtual birack on $G$ modulo the equivalence relation
generated by the crossing relations. Both sets are twisted virtual biracks
under the operations
\[B([x],[y])=([B_1(x,y)],[B_2(x,y)]),\quad V([x],[y])=([V_1(x,y)],[V_2(x,y)]),
\quad T([x])=[T(x)]\]
where $[x]$ is the equivalence class of $x\in W(G)$.

In \cite{N2} it is observed that the number of labelings by a rank $N$ birack 
of a link diagram is unchanged by the \textit{$N$-phone cord move}:
\[\includegraphics{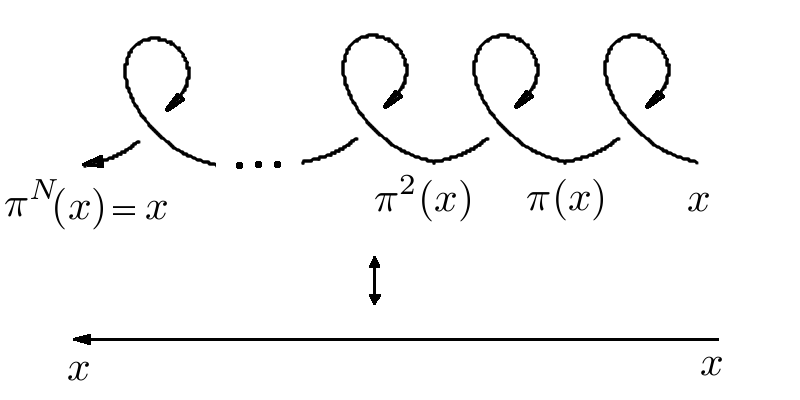}\]

In particular, the number of labelings is periodic in the writhe of each
component of $L$ with period $N$, and consequentially the sum of the
numbers of labelings over a complete period of writhes mod $N$ forms an 
invariant of unframed isotopy. We would like to extend this invariant to
the category of twisted virtual biracks.

\begin{definition}
\textup{Let $L$ be a twisted virtual link with $c$ components 
and $X$ a twisted virtual birack with rank $N$. The \textit{integral 
twisted virtual birack counting invariant} is the number of $X$-labelings 
of $L$ over a complete period of blackboard framings of $L$ mod $N$.
That is,}
\[\Phi_{X}^{\mathbb{Z}}(L)=\sum_{\mathbf{w}\in(\mathbb{Z}_N)^c} 
|\mathrm{Hom}(TVB(L,\mathbf{w}),X)|\]
\textup{where $TVB(L,\mathbf{w})$ is a diagram of $L$ with framing vector
$\mathbf{w}\in(\mathbb{Z}_N)^c$.}
\end{definition}

By construction, we have

\begin{theorem}
If $L$ and $L'$ are twisted virtually isotopic twisted virtual links
and $X$ is a finite twisted virtual birack, then 
$\Phi_{X}^{\mathbb{Z}}(L)=\Phi_{X}^{\mathbb{Z}}(L')$.
\end{theorem}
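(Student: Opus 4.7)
The plan is to reduce the statement to the previous theorem (invariance of labeling counts under blackboard framed isotopy) and then show that the sum over $(\mathbb{Z}_N)^c$ absorbs the only remaining move, namely the classical type I Reidemeister move. Unframed twisted virtual isotopy differs from blackboard framed twisted virtual isotopy precisely in that the classical type I move is allowed; this move adds or removes a positive or negative kink on one component of $L$, so its effect on any fixed diagram is to change the framing vector $\mathbf{w}\in\mathbb{Z}^c$ by $\pm \mathbf{e}_i$ for some component $i$. All other oriented twisted virtual Reidemeister moves preserve the framing and are already covered by the preceding theorem.

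Next I would establish that for each $\mathbf{w}\in\mathbb{Z}^c$, the cardinality $|\mathrm{Hom}(TVB(L,\mathbf{w}),X)|$ depends only on the class of $\mathbf{w}$ in $(\mathbb{Z}_N)^c$. This is the $N$-phone cord move argument from \cite{N2}: adding $N$ consecutive positive kinks on a component can be deformed, via blackboard framed classical Reidemeister moves, into a configuration in which the kink labels cycle by the kink map $\pi$; since $\pi^N=\mathrm{Id}$, the labeling count is unchanged. In the twisted virtual setting the same local argument applies provided no twist bar sits in the disk where the $N$-phone cord move is performed, and if one does, the lemma on moving a twist bar past a classical kink (together with the detour move for virtual crossings) lets us first push any bars and virtual strands out of that disk. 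Hence $|\mathrm{Hom}(TVB(L,\mathbf{w}),X)|$ is a well-defined function of $\mathbf{w}\bmod N$.

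To finish, let $L'$ be obtained from $L$ by a single oriented twisted virtual Reidemeister move. For all moves other than type I, the framing vector is preserved and the previous theorem gives $|\mathrm{Hom}(TVB(L,\mathbf{w}),X)|=|\mathrm{Hom}(TVB(L',\mathbf{w}),X)|$ for every $\mathbf{w}$, so the two sums defining $\Phi_X^{\mathbb{Z}}$ agree term by term. For a type I move on component $i$, the previous theorem instead gives
\[
|\mathrm{Hom}(TVB(L,\mathbf{w}),X)|=|\mathrm{Hom}(TVB(L',\mathbf{w}\pm\mathbf{e}_i),X)|
\]
for all $\mathbf{w}$. Because translation by $\pm\mathbf{e}_i$ is a bijection of $(\mathbb{Z}_N)^c$, summing over $\mathbf{w}\in(\mathbb{Z}_N)^c$ on each side yields the same total, so $\Phi_X^{\mathbb{Z}}(L)=\Phi_X^{\mathbb{Z}}(L')$.

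The main obstacle is the second step: verifying that the $N$-phone cord argument, originally a purely classical manipulation, remains valid when twist bars are present. I expect this to follow routinely from the first lemma of Section~\ref{TVL} and the detour move, but it is the one place where twisted features interact nontrivially with the framing-periodicity that underlies the definition of $\Phi_X^{\mathbb{Z}}$; everything else is a direct bookkeeping argument on top of the previous theorem.
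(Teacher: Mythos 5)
Your proposal is correct and matches the paper's (essentially unstated) argument: the paper asserts the theorem ``by construction,'' relying on the preceding theorem for the framed moves, on the $N$-phone cord periodicity from \cite{N2} for well-definedness mod $N$, and on the first lemma of Section~\ref{TVL} for pushing twist bars past kinks. You have simply written out the bookkeeping the paper leaves implicit, and you correctly identify the twist-bar/kink interaction as the only genuinely twisted ingredient.
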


Starting with a virtual link diagram, it natural to ask which
placements of twist bars yield distinct twisted virtual links. In light of
moves tI and tII, we can place at most one twist bar on any portion
of the knot between classical crossing points, i.e. on any classical 
semiarc.

\begin{example}\textup{
The \textit{virtual Hopf link} is the smallest (in terms of classical and
virtual crossing numbers) nontrivial virtual link with two components. It 
has only two classical semiarcs, so there are $2^2=4$ potentially different 
twisted links which project to the virtual Hopf link under removal of twist 
bars.}

\[\includegraphics{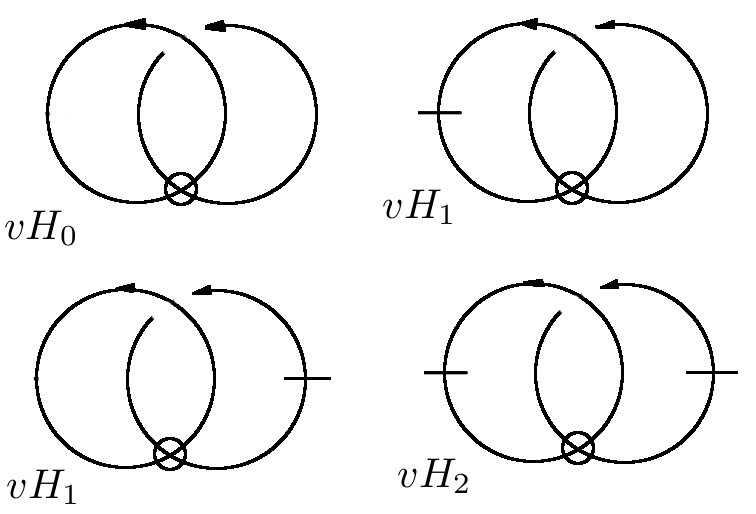} \]

\textup{We can see that two of the possibilities are equivalent using twisted
virtual isotopy moves:}
\[\begin{array}{c}
\includegraphics{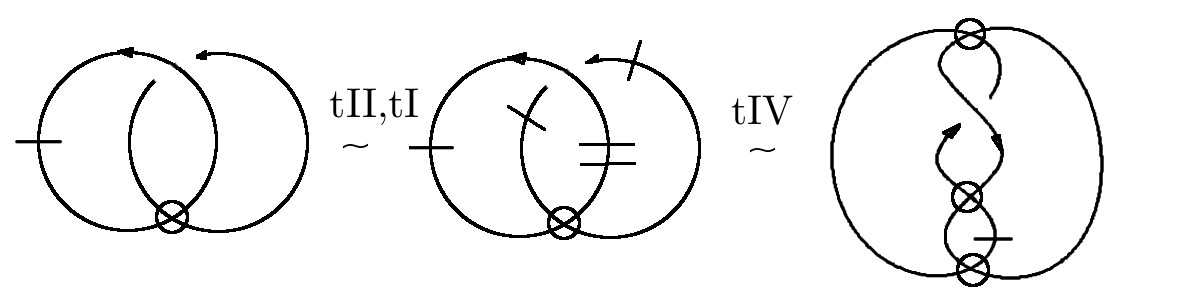} \\
\includegraphics{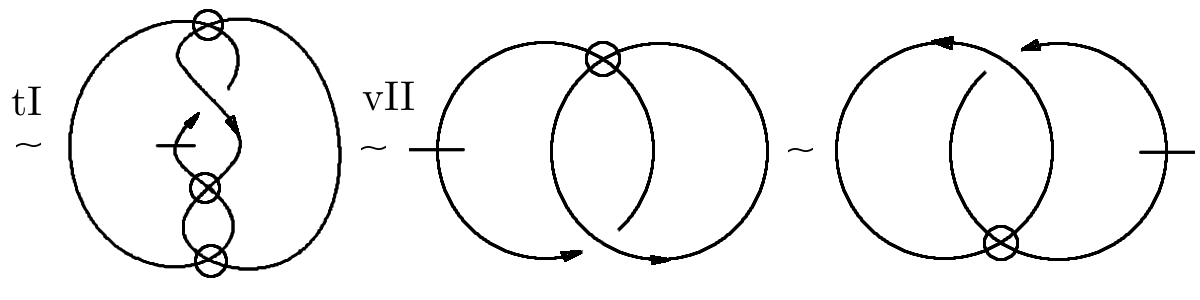} \\
\end{array}\]
\textup{We note that in this move sequence, the twist bar appears to move 
from one component of the link to the other; however, this is an illusion 
due to the symmetry of the link; in fact, the twisted virtual moves have 
changed the over/under relationship of the crossing..}

\textup{On the other hand, we can use the integral counting invariant 
$\Phi_{X}^{\mathbb{Z}}(L)$ with respect to various twisted virtual biracks
to show that the links with two twist bars, one twist bar and zero twist
bars are not equivalent. Let $X_1$ be the twisted virtual birack with
matrix
\[M_{X_1}=\left[\begin{array}{cc|cc|cc|cc|c}
2 & 2 & 2 & 2 & 2 & 2 & 2 & 2 & 2 \\
1 & 1 & 1 & 1 & 1 & 1 & 1 & 1 & 1
\end{array}\right]\]
Then we have $\Phi_{X_1}^{\mathbb{Z}}(vH_0)=4$, $\Phi_{X_1}^{\mathbb{Z}}(vH_1)=0$, 
and $\Phi_{X_1}^{\mathbb{Z}}(vH_2)=0$. Similarly,
let $X_2$ be the twisted virtual birack with
matrix
\[M_{X_2}=\left[\begin{array}{ccc|ccc|ccc|ccc|c}
2 & 2 & 1 & 2 & 2 & 1 & 1 & 1 & 1 & 1 & 1 & 1 & 2 \\
1 & 1 & 2 & 1 & 1 & 2 & 2 & 2 & 2 & 2 & 2 & 2 & 1 \\
3 & 3 & 3 & 3 & 3 & 3 & 3 & 3 & 3 & 3 & 3 & 3 & 3\\
\end{array}\right]\]
Then we have $\Phi_{X_2}^{\mathbb{Z}}(vH_0)=5$, $\Phi_{X_2}^{\mathbb{Z}}(vH_1)=3$, 
and $\Phi_{X_2}^{\mathbb{Z}}(vH_2)=5$.
}\end{example}

Finally, we note that as in the case of biracks, we have several enhancements 
of the twisted virtual birack counting invariant. An \textit{enhancement} 
associates an $X$-labeled move invariant signature to each labeling
of a twisted virtual link diagram so that instead of simply counting 
labelings, we collect the signatures to get a multiset whose cardinality
recovers the counting invariant but is in general a stronger invariant.
We usually convert the multiset into a polynomial for ease of comparison
by taking a generating function. The standard enhancements include the 
following:
\begin{itemize}
\item \textit{Image enhancement.} Given a valid labeling $f:TVB(L)\to X$ of 
the semiarcs in twisted virtual link diagram $L$ of $c$ components by a
twisted virtual birack $X$ of rank $N$, the image of $f$ is an invariant of
twisted virtual isotopy. From a labeled link diagram, we can compute 
$\mathrm{Im}(f)$ by taking the closure under the operations 
$B_1(x,y), B_2(x,y), V_1(x,y), V_2(x,y)$ and $T(x)$ of the set of all
elements of $Y$ appearing as semiarc labels. Then we have an enhanced
invariant
\[\Phi_{X}^{\mathrm{Im}}(L)=\sum_{\mathbf{w}\in(\mathbb{Z}_N)^c}
\left(\sum_{f\in\mathrm{Hom}(TVB(L,\mathbf{w}),X)} u^{|\mathrm{Im}(f)|}\right).\] 
\item \textit{Writhe enhancement.} For this one, we simply keep track of 
which writhe vectors contribute which labelings. For a writhe vector 
$\mathbf{w}=(w_1,\dots,w_c)$, let us denote $q^{\mathbf{w}}=q_1^{w_1}\dots 
q_c^{w_c}$.
Then the \textit{writhe enhanced invariant} is
\[\Phi_{X}^{W}(L)=\sum_{\mathrm{w}\in(\mathbb{Z}_n)^c} 
|\mathrm{Hom}(TVB(L,\mathbf{w}),X)|q^{\mathbf{w}}.\]
\item \textit{Twisted virtual birack polynomials.} Let $X$ be a finite twisted
virtual birack with birack matrix $[M_1|M_2|M_3|M_4|M_5]$. 
For each element $x_k\in X=\{x_1,\dots, x_n\}$, 
let 
\[c_i(x_k)=|\{j\ :\ M_i[x_j,x_k]=x_j\}|\quad \mathrm{and}\quad 
r_i(x_k)=|\{j\ :\ M_i[x_k,x_j]=x_k\}|.\] Then for any twisted 
virtual subbirack $Y\subset X$,
the sub-TVG polynomial of $Y$ is
\[p_{Y\subset X}=\sum_{x\in Y} \left(\sum_{i=1}^5 t_i^{c_i(x)}s_i^{r_i(x)}\right).\]
Then for each $X$-labeling $f$ of $L$, the twisted virtual subbirack 
polynomial of the image of
$f$ gives an invariant signature, so we have the \textit{twisted virtual 
birack polynomial enhanced invariant}
\[\Phi_{X}^{p}(L)=\sum_{\mathbf{w}\in(\mathbb{Z}_N)^c}
\left(\sum_{f\in\mathrm{Hom}(TVB(L,\mathbf{w}),X)} u^{p_{\mathrm{Im}(f)\subset X}}\right).\]
See \cite{N3} for more. 
\end{itemize}

\section{\large\textbf{Questions}}\label{Q}

In remark \ref{noinc} we observed that trivial virtual and twisted
operations do not generally give a birack the structure of a twisted 
virtual birack. In \cite{SPC}, a construction called the 
\textit{twisted product} is given in which a birack $B$ and a choice of
automorphism of $B$ are used to define a twisted virtual birack structure
on the Cartesian product $B\times B$. What other ways are there to define a
twisted virtual birack given a birack?

As with virtual knots, twisted virtual knot theory opens the possibility
of new invariants of classical knots and link defined in terms of twisted
virtual links, since classical links form a subset of twisted virtual links
and any invariant of twisted virtual isotopy is \textit{a fortiori} an 
invariant of classical links. What new invariants of classical and virtual 
links can only be defined using twisted virtual biracks?

We have identified a few enhancements of twisted virtual biracks; what are
some additional enhancements? What is the best way to generalize virtual 
biquandle homology (see \cite{CN}) to define twisted virtual birack homology?

For classical knots, the fundamental quandle is a complete invariant up to
ambient homeomorphism; it is conjectured (see \cite{FJK}) that the fundamental
biquandle is a complete invariant of virtual knots up to vertical mirror image.
Is the fundamental twisted virtual biquandle a complete invariant of twisted
virtual knots up to vertical mirror image? What conditions on 
twisted virtual knots with isomorphic fundamental twisted virtual birack
suffice to guarantee that the knots are twisted-virtually isotopic?

\noindent
\textsc{Department of Mathematical Sciences \\
Claremont McKenna College \\
850 Columbia Ave. \\
Claremont, CA 91711}

\end{document}